\newtheorem{lemma}{Lemma}
\newtheorem{theorem}[lemma]{Theorem}
\newtheorem{corollary}[lemma]{Corollary}
\newtheorem{conjecture}[lemma]{Conjecture}
\title{The number of perfect matchings in a brick}
\author{{Fuliang Lu\thanks{Email address: flianglu@163.com}, Huali Pan}\\
{\small  School of Mathematics and Statistics,
Minnan Normal University, Zhangzhou 363000, China}}
\date{}\makeatother
\begin{document}
\maketitle

\begin{abstract}
  A 3-connected graph is a {\em brick}  if the graph obtained from it by deleting any two distinct vertices has a perfect matching.
  The importance of bricks stems from the fact that they are building blocks of the matching decomposition procedure of Kotzig, and Lov\'asz and Plummer.

 Lucchesi and Murty conjectured that there exists a positive integer $N$ such that for every $n\geq N$, every brick on $n$ vertices
has at least $n-1$ perfect matchings. We
present an infinite family of bricks such that for each  even integer  $n$ ($n>17$), there exists a brick with $n$ vertices in this family that  contains $\lceil0.625n\rceil$  perfect matchings, showing that this conjecture fails.
\end{abstract}

{\bf Keywords} \  perfect matchings; bricks; the number of perfect matchings;\\

\section{Introduction}
All graphs in this paper are finite and contains no loops (multiple edges are allowed).
 We follow \cite{BM08} for undefined notation and terminology.
 For a graph $G$, we denote by $V(G)$ and $E(G)$ the vertex set and edge set of $G$, respectively.
A connected nontrivial graph  is \emph{matching covered} if each of its edges lies in some perfect matching. Matching covered graphs are also called 1-extendable graphs \cite{LP86}.

 The problem of computing the number of perfect matchings in a graph has been much studied in combinatorics and
has connections to problems in molecular chemistry, statistical physics; see for instance [7, Section 8]. In general graphs, this problem is $\sharp$P-complete \cite{va}. We may focus on matching covered graphs when we consider the problem, as the removal of  edges not in any perfect matchings would not affect the number of perfect matchings.

For $X\subseteq V(G)$, by $\partial_G(X)$ we mean the {\it edge cut} of $G$, which is the set of edges of $G$ with one end in $X$ and the other in  $\overline{X}$, where $\overline{X}=V(G)\backslash X$; by $G/(X\rightarrow x)$ or simply $G/X$ we mean the graph obtained by contracting $X$ to a single vertex $x$, the graph $G/(\overline{X}\rightarrow \overline{x})$ or simply $G/\overline{X}$ is defined analogously. We shall refer to these
two graphs $G/X$ and $G/\overline{X}$ as the $\partial_G(X)$-contractions of $G$.
Let $G$ be a matching covered graph.
An edge cut $C=\partial_G(X)$ of  $G$ is {\it tight} if $|M\cap C|=1$ for each perfect matching $M$ of $G$ and is {\it separating} if $G/X$ and $G/{\overline X}$ are matching covered. A tight cut $\partial_G(X)$ is {\it trivial} if either $|X|=1$ or $|\overline{X}|=1$, and {\it nontrivial} otherwise.
A matching covered graph that is free of nontrivial tight cuts is a {\it brace} if
it is bipartite, and a \emph{brick} if it is nonbipartite.
The importance of bricks and braces stems from the fact that they are building blocks of matching covered graphs by the matching
decomposition procedure of Kotzig, and Lov\'asz and Plummer \cite{LP86}. Furthermore,
Lov\'asz~\cite{Lovasz87} proved
that any matching covered graph can be decomposed
into a unique list of bricks and braces  by the tight cut decomposition.

 A classical theorem
of Petersen states that every 2-connected cubic graph has at least one perfect matching \cite{pe}.  It can be proven that every  2-connected cubic graph is  matching covered \cite{ple},  which implies that it contains at least 3 perfect matchings.
Confirming a conjecture of Lov\'asz and Plummer, Esperet,
Kardo\v s, King, Kr\'al' and Norine showed that every 2-connected cubic
graph on $n$ vertices has at least $2^{\frac{n}{3656}}$ perfect matchings \cite{es}.
 Carvalho,   Lucchesi and  Murty showed
that any brace on $n$ vertices has at least $\frac{(n-2)^2}{8}$ perfect matchings \cite{CLM13}.
  Carvalho,   Lucchesi and  Murty \cite{CLM05}
defined a brick $G$ to be {\em extremal}
if the number of perfect matching of $G$ is equal to the dimension of the lattice spanned by the set of incidence vectors of perfect matchings of $G$, that is $|E(G)|-|V(G)|+1$. Clearly the number of perfect matchings in a brick $G$ is at least $|E(G)|-|V(G)|+1$.

For an integer $k\geq3$, the \emph{wheel} $W_k$ is the graph obtained from a cycle $C$ of length $k$ by adding a new vertex $h$ and joining it to all vertices of $C$. The vertex $h$ is its \emph{hub}, and the edges incident
with $h$ are its \emph{spokes}. An odd wheel
is a wheel with an odd number of spokes.  It can be checked that each odd wheel is a brick, and  every spoke of the odd wheel lies in exactly one perfect matching. So the number of perfect matchings in
an odd wheel with  $n$ vertices is $n-1$.
Lucchesi and Murty conjectured that the lower bound  of the number of perfect matchings in a brick is  a linear function  of the number of vertices.
  More exactly, see the following conjecture (unsolved problems No.5 in \cite{Lucchesi2024}).

\begin{conjecture}{\em \cite{Lucchesi2024}}\label{Lucchesi2024}
There exists a positive integer $N$ such that for every $n\geq N$, every brick on $n$ vertices
has at least $n-1$ perfect matchings.
\end{conjecture}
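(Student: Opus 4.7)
The plan is to combine the lattice-dimension lower bound $|E(G)|-|V(G)|+1$ of Carvalho, Lucchesi, and Murty---which already gives at least $n-1$ perfect matchings as soon as $|E(G)|\geq 2n-2$---with the exponential lower bound of Esperet, Kardo\v s, King, Kr\'al', and Norine, which gives $2^{n/3656}$ perfect matchings for every cubic matching covered graph and in particular exceeds $n-1$ for all $n$ above some explicit $N_0$. Since every brick is $3$-connected we have $|E(G)|\geq 3n/2$, so these two results between them already handle the ``extremes'' of the edge count: the dense bricks with $|E(G)|\geq 2n-2$, and the cubic bricks with $|E(G)|=3n/2$.

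What remains is the intermediate regime $3n/2<|E(G)|<2n-2$, in which $\delta(G)=3$ but there are a bounded (and possibly small) number of vertices of degree $\geq 4$. I would attack this regime by induction on the edge excess $|E(G)|-\lceil 3n/2\rceil$, with the cubic case as the base. For the inductive step I would pick a vertex $v$ of degree at least $4$ and an edge $e=vw$ whose removal leaves a matching covered graph; then, using the tight cut decomposition of Lov\'asz, I would extract from $G-e$ a smaller brick $G'$ on at most $n$ vertices, apply the inductive hypothesis to $G'$, and lower bound the perfect matchings of $G$ by those of $G-e$ plus the perfect matchings of $G$ using $e$. The target inequality is that each edge above the cubic floor contributes at least one new perfect matching, which would push the count past $n-1$ uniformly throughout the intermediate regime.

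The main obstacle, and the step I expect to decide the whole attempt, is precisely this ``one new matching per excess edge'' claim. There is no generic reason why removing a single edge should strictly decrease the perfect matching count: a brick may be rigid enough that many of its perfect matchings avoid a particular edge entirely, or symmetric enough that multiple matchings disappear simultaneously when one edge is contracted into a tight cut. Moreover, the tight cut decomposition of $G-e$ may yield a brace rather than a brick, or split $G-e$ into several bricks of wildly different sizes, at which point the inductive bookkeeping becomes delicate and the bound $n-1$ on the full graph no longer follows from the bounds on the pieces. Turning the heuristic into a proof therefore seems to require a genuine structural understanding of near-cubic bricks---for instance, a classification of bricks with $|E(G)|-3n/2$ small, together with a demonstration that each added edge creates a perfect matching not already present. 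Without such a structural theorem I would expect the intermediate regime to block a straightforward proof, and I would invest essentially all of the effort in trying to establish this missing structural lemma; every other ingredient in the plan is already available in the literature.
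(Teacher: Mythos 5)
There is a fundamental problem with your plan: the statement you are trying to prove is false, and the entire point of this paper is to \emph{disprove} it. The authors construct an infinite family $\mathcal{G''}$ of bricks $G''_n$ on $n$ vertices ($n>17$ even) that are \emph{extremal}, meaning the number of perfect matchings equals exactly the lattice dimension $|E(G''_n)|-|V(G''_n)|+1$. These graphs are near-cubic (all vertices have degree $3$ except one), so $|E(G''_n)|=t+1+3n/2$ with $t+1\approx n/8$, and the perfect matching count is exactly $\lceil 5n/8\rceil$, which is strictly less than $n-1$ for all $n$ in range. No proof of the conjecture can exist.

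To your credit, you correctly identified the decisive obstacle: the intermediate regime $3n/2<|E(G)|<2n-2$, where the lattice bound $|E(G)|-|V(G)|+1$ falls short of $n-1$ and no cubic-graph theorem applies. But your hoped-for structural lemma --- that each edge above the cubic floor contributes a new perfect matching pushing the count past $n-1$ --- is precisely what fails. The counterexamples are built by splicing odd wheels and repeatedly inserting triangles at degree-$3$ vertices (the operation $G\langle u\rangle = G(u)\odot K_4(v)$), using Theorem~\ref{Th:splicing} and Corollary~\ref{cor:triangle} to certify at each step that the resulting brick remains extremal, i.e., that the matching count never exceeds the lattice dimension. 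Each triangle insertion adds $2$ vertices and $3$ edges but only one new perfect matching, so the count grows like $5n/8$ rather than $n$. The correct response to this statement is a refutation, not a proof; your energy in the intermediate regime should have gone into looking for extremal near-cubic bricks rather than a lemma ruling them out.
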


We
construct an infinite family of bricks on $n$ $(n>17)$ vertices that  contains $\lceil0.625n\rceil$ perfect matchings (see Theorem \ref{main}), which shows  that this conjecture fails. This family of bricks
will be presented in Section 3. We will give some useful results in the following section.

\section{Preliminaries}
We begin with some notation.  For $X\subseteq V(G)$,  by $N_G(X)$, or simply $N(X)$, we mean the set of vertices that are not in $X$ but have neighbors in $X$.
An edge of a graph is {\em solitary} if it lies in precisely one perfect matching of the
graph (solitary edges appeared in benzenoid hydrocarons of
theoretical chemistry under name ``forcing edges").  A brick is \emph{solid} if it is free of nontrivial separating cuts.

Let $G$ and $H$ be two vertex-disjoint graphs and let $u$ and $v$ be vertices of $G$ and $H$, respectively, such that $d_G(u)=d_H(v)$.
Moreover, let $\theta$ be a given bijection between $\partial_G(u)$ and $\partial_H(v)$.
We denote by $(G(u)\odot H(v))_\theta$ the graph obtained from the union of $G-u$ and $H-v$ by joining, for each edge $e$ in $\partial_H(v)$, the end of $e$ in $H$ belonging to $V(H)-v$ to the end of $\theta(e)$ in $G$ belonging to $V(G)-u$;
and refer to $(G(u)\odot H(v))_\theta$ as the graph obtained by \emph{splicing $G$ (at $u$), with $H$ (at $v$), with respect to the bijection $\theta$}, for brevity, to $G(u)\odot H(v)$.
In general, the graph resulting from splicing two graphs $G$ and $H$ depends on the choice of $u$, $v$ and $\theta$.
Let $X=V(G)\setminus\{u\}$. Obviously,   $G(u)\odot H(v)/X\cong H$, $G(u)\odot H(v)/\overline{X}\cong G$.

Specially,
if $H=K_4$ and $d_G(u)=3$, then the splicing operation  $G(u)\odot K_4(v)$ can be intuitively viewed as the operation that `inserts a triangle' at $u$. So we also call such an operation the {\it triangle-insertion} at $u$ of $G$ and denote  $G(u)\odot K_4(v)$ simply by $G\langle u\rangle$.
As an extension of triangle-insertion, for vertices $v_1,v_2,\ldots,v_k$ of $V(G)$, we denote the graph obtained from $G$ by the triangle-insertions at  $v_1,v_2,\ldots,v_k$ by $G\langle \{v_1,v_2,\ldots,v_k\}\rangle$.

 Edmonds, Lov\'asz and   Pulleyblank \cite{ELP82} showed that a graph $G$ is a \emph{brick} if and only if $G$ is 3-connected and for any two distinct vertices $x$ and $y$ of $G$, $G-\{x,y\}$ has a perfect matching. Moreover, we have the following lemma.
\begin{lemma}{\em\cite{CLM05}} \label{lem:extremal}
Let $G$ be a matching covered graph, $C=\partial_G (X)$ be a nontrivial
separating cut of $G$ such that each $C$-contraction of $G$ is a brick. Then, $G$ is a
brick if and only if no pair of vertices of $G$, one in $X$, the other in $\overline{X}$, covers the set
of edges of $C$.
\end{lemma}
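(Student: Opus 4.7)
The plan is to invoke the Edmonds--Lov\'asz--Pulleyblank characterization stated just above the lemma, which reduces brickhood to two assertions: $G$ is $3$-connected, and $G-\{u,v\}$ has a perfect matching for every pair $\{u,v\}$ of distinct vertices of $G$. I would prove the two directions of the equivalence separately.

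For the only-if direction I would argue by contrapositive. If a pair $\{u,v\}$ with $u\in X$ and $v\in\overline{X}$ covers $C$, then deleting $u$ and $v$ removes every edge of $C$. The nontriviality of $\partial_G(X)$ forces both $X\setminus\{u\}$ and $\overline{X}\setminus\{v\}$ to be nonempty, so $\{u,v\}$ is a $2$-vertex separator of $G$ and $G$ cannot be $3$-connected, hence not a brick.

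For the if-direction I would assume no covering pair exists and verify both clauses of the characterization. A key observation is that the $\partial_G(X)$-contractions satisfy $G/X-x=G[\overline{X}]$ and $G/\overline{X}-\overline{x}=G[X]$, so the $3$-connectivity of the bricks $G/X$ and $G/\overline{X}$ immediately delivers that $G[X\setminus S]$ and $G[\overline{X}\setminus S]$ are connected for every $|S|\le 1$, and the pair-deletion property applied inside $G/X$ and $G/\overline{X}$ gives that $G[X\setminus\{u\}]$ and $G[\overline{X}\setminus\{v\}]$ have perfect matchings for every $u\in X$ and $v\in\overline{X}$. From here I would split on how the deleted pair $\{u,v\}$ meets $X$ and $\overline{X}$. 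When $u$ and $v$ lie on the same side, say both in $X$, one takes a perfect matching of $G/\overline{X}-\{u,v\}$ and splices it with a perfect matching of $G/X$ through a common cut edge (available since $G/X$ is matching covered), producing both the required perfect matching of $G-\{u,v\}$ and a connectivity witness. When $u\in X$ and $v\in\overline{X}$, the union of perfect matchings of $G[X\setminus\{u\}]$ and $G[\overline{X}\setminus\{v\}]$ perfectly matches $G-\{u,v\}$, while the no-covering-pair hypothesis guarantees a surviving edge of $C$ between $X\setminus\{u\}$ and $\overline{X}\setminus\{v\}$, yielding connectivity; this is the step where the hypothesis is actually used.

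The main obstacle will be the splicing bookkeeping in the same-side case: one has to verify that the cut edge chosen to match the contracted vertex in $G/\overline{X}-\{u,v\}$ is consistent with a perfect matching of $G/X$ through its image, so that the resulting object is genuinely a perfect matching of $G-\{u,v\}$ rather than a matching of one of the contractions. The remaining pieces (symmetric case, and verifying $2$-connectivity and connectivity of $G$ itself) are routine variants of the same contraction/uncontraction argument.
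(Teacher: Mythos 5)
The paper does not prove this lemma at all -- it is quoted from Carvalho, Lucchesi and Murty \cite{CLM05} -- so there is no in-paper proof to compare against; judged on its own, your argument is the standard one (reduce brickhood to the Edmonds--Lov\'asz--Pulleyblank characterization and treat each pair of deleted vertices by contraction/uncontraction), and it is essentially sound. The only-if direction is fine: a covering pair $u\in X$, $v\in\overline{X}$ is a $2$-vertex cut because nontriviality makes $X\setminus\{u\}$ and $\overline{X}\setminus\{v\}$ nonempty. The mixed-pair case of the if-direction is also fine, and you correctly locate the one place the no-covering hypothesis is used.

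The one genuine flaw is the claim that, for a same-side pair $u,v\in X$, the spliced matching of $G/\overline{X}-\{u,v\}$ with a matching of $G/X$ yields ``a connectivity witness.'' A perfect matching of $G-\{u,v\}$ says nothing about its connectivity, so as written that step does not establish $3$-connectedness for same-side pairs. The fix is short but is a different argument from the splice: since $G/\overline{X}$ is $3$-connected, $G/\overline{X}-\{u,v\}$ is connected, so every vertex of $X\setminus\{u,v\}$ has a path to $\overline{x}$ whose last edge lies in $C$ and hence, uncontracted, reaches some vertex of $\overline{X}$ in $G-\{u,v\}$; and $G[\overline{X}]=G/X-x$ is connected because $G/X$ is $3$-connected. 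Together these give connectivity of $G-\{u,v\}$ (note the covering hypothesis is not needed here, only in the mixed case). With that substitution, and the analogous easy cases $|S|\le 1$, your outline becomes a complete proof; the splicing bookkeeping you worry about is unproblematic because every edge of $C$ is incident with the contracted vertex in each $C$-contraction and each contraction is matching covered, so the cut edge used by the matching of $G/\overline{X}-\{u,v\}$ extends to a perfect matching of $G/X$.
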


The following corollary can be gotten by Lemma \ref{lem:extremal} directly.
\begin{corollary}\label{cor:tri}Let $G$
be a brick and $u$ be a vertex of degree 3 in $G$. Then  $G\langle u\rangle$ is a  brick.
\end{corollary}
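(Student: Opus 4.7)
The plan is to apply Lemma~\ref{lem:extremal} to the 3-edge cut $C = \partial_{G\langle u\rangle}(X)$, where $X = V(G) \setminus \{u\}$ is regarded as a subset of $V(G\langle u\rangle)$. By the description of splicing given in the text, the two $C$-contractions satisfy $G\langle u\rangle / X \cong K_4$ and $G\langle u\rangle / \overline{X} \cong G$; both are bricks (the first by inspection, the second by hypothesis), and in particular both are matching covered. Since $G$ is 3-connected with at least four vertices we have $|X| \geq 3$, while $|\overline{X}| = 3$, so $C$ is nontrivial. A standard gluing of a perfect matching of $G\langle u\rangle/X$ and a perfect matching of $G\langle u\rangle/\overline{X}$ that agree on a given edge of $C$ shows that $G\langle u\rangle$ is matching covered, so $C$ is a nontrivial separating cut.

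To invoke Lemma~\ref{lem:extremal} I still need to verify that no pair of vertices, one in $X$ and one in $\overline{X}$, covers all three edges of $C$. The key preliminary observation is that $u$ has three \emph{distinct} neighbors in $G$: otherwise the (at most two) distinct neighbors of $u$ would form a vertex cut separating $u$ from the rest of $V(G)$, contradicting the 3-connectivity of $G$. Consequently the three edges of $C$ meet three distinct vertices of $X$ (the three neighbors of $u$ in $G$) and, by construction, three distinct vertices of $\overline{X}$ (the three vertices of the inserted triangle $K_4 - v$).

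Each vertex in $\overline{X}$ is therefore incident with exactly one edge of $C$, and each vertex in $X$ is incident with at most one edge of $C$. Hence any pair $\{x, y\}$ with $x \in X$ and $y \in \overline{X}$ covers at most two of the three edges of $C$, so the condition of Lemma~\ref{lem:extremal} is satisfied, and $G\langle u\rangle$ is a brick.

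The only non-trivial step is the short argument ruling out multi-edges at $u$; everything else is a direct unpacking of the splicing definition and a bookkeeping application of Lemma~\ref{lem:extremal}.
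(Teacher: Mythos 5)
Your proof is correct and follows the same route the paper intends: the paper derives the corollary ``directly'' from Lemma~\ref{lem:extremal} applied to the cut $\partial_{G\langle u\rangle}(V(G)\setminus\{u\})$, whose contractions are $K_4$ and $G$, exactly as you do. Your added checks (that $G\langle u\rangle$ is matching covered via the standard splicing/gluing argument, and that 3-connectivity forces the three neighbours of $u$ to be distinct so no vertex pair covers the 3-edge cut) are precisely the details the paper leaves implicit.
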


\begin{theorem}{\em\cite{CLM05}}\label{Th:splicing}
Let $W$ be an odd wheel with hub $h$, up to multiple edges
incident with $h$. Let $G$ be a graph obtained by the splicing of $W$ at $h$ and an
extremal brick. Let $C$ denote the cut $\partial_W(h)=\partial_G(V(W)\setminus\{h\})$. If $G$ has
precisely one perfect matching  that contains more than one edge in $C$, then $G$
is a nonsolid extremal brick.
\end{theorem}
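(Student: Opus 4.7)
The plan is to verify the three conclusions---nonsolid, brick, and extremal---separately. Set $X=V(W)\setminus\{h\}$ and $C=\partial_G(X)$. First, by the splicing construction, $G/X\cong H$ and $G/\overline{X}\cong W$ are both bricks (hence matching covered), so $C$ is a separating cut. Since $W$ has rim length at least $3$ and every brick has at least $4$ vertices, $|X|\geq 3$ and $|\overline{X}|\geq 3$, making $C$ nontrivial. Thus $G$ has a nontrivial separating cut, so $G$ is nonsolid.

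Next, I would show $G$ is a brick by applying Lemma~\ref{lem:extremal}: since $C$ is a nontrivial separating cut whose contractions are bricks, it suffices to show that no pair $\{x,y\}$ with $x\in X$ and $y\in\overline{X}$ covers $C$. The step I expect to require the most care is ruling this out, and the key observation is a short parity/matching argument. Suppose some pair $(x,y)$ did cover $C$, so every edge of $C$ is incident to $x$ or $y$. For any perfect matching $M$ of $G$, the vertex $x$ is saturated by exactly one edge of $M$ and likewise $y$, so at most two edges of $M$ can lie in $C$; hence $|M\cap C|\leq 2$. On the other hand, $|X|$ equals the rim length of $W$, which is odd (as $W$ is an odd wheel), so by the parity of cut crossings $|M\cap C|$ must be odd for every perfect matching $M$. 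Together these force $|M\cap C|=1$ for every $M$, contradicting the hypothesis that some perfect matching has $|M\cap C|>1$. Therefore no pair covers $C$, and Lemma~\ref{lem:extremal} yields that $G$ is a brick.

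Finally, for extremality I would count perfect matchings of $G$ grouped by $|M\cap C|$. Each spoke $e$ of the odd wheel $W$ is solitary in $W$: removing $h$ and the rim-end of $e$ leaves a path of even length with a unique perfect matching, so $W$ has exactly one perfect matching containing $e$. Consequently, perfect matchings of $G$ with $|M\cap C|=1$ are in bijection with pairs (spoke $e$ of $W$, perfect matching of $H$ containing the corresponding edge $e_H\in\partial_H(v)$ under $\theta$). Summing over $e_H\in\partial_H(v)$ and using that every perfect matching of $H$ meets $\partial_H(v)$ in exactly one edge yields $N(H)=|E(H)|-|V(H)|+1$ such matchings (invoking the extremality of $H$). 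Adding the unique matching with $|M\cap C|>1$ gives $N(G)=|E(H)|-|V(H)|+2$. A short computation using $|V(G)|=|V(W)|+|V(H)|-2$ and $|E(G)|=|E(W)|+|E(H)|-k$ with $k=d_W(h)=d_H(v)$, together with $|E(W)|-|V(W)|+1=k$ for an odd wheel, then gives $|E(G)|-|V(G)|+1=|E(H)|-|V(H)|+2=N(G)$, so $G$ is extremal.
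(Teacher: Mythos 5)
The paper does not actually prove this statement: Theorem~\ref{Th:splicing} is imported verbatim from \cite{CLM05}, so there is no internal proof to compare against. Judged on its own, your argument is essentially correct and self-contained, and it follows the natural route one would expect (and that the cited source takes in spirit): brickness via Lemma~\ref{lem:extremal}, nonsolidity from the nontrivial separating cut $C$, and extremality by counting. Your key steps check out: if a pair $x\in X$, $y\in\overline{X}$ covered $C$, then $|M\cap C|\le 2$ for every perfect matching $M$, while $|X|$ (the number of rim vertices) is odd, forcing $|M\cap C|=1$ always, contradicting the hypothesized matching with more than one edge in $C$; and since each spoke of an odd wheel (even with multiplied spokes) lies in exactly one perfect matching of $W$, the matchings of $G$ meeting $C$ exactly once are in bijection with the perfect matchings of the extremal brick $H$, giving $N(G)=N(H)+1=|E(H)|-|V(H)|+2$, which your dimension computation correctly identifies with $|E(G)|-|V(G)|+1$ using $|E(W)|-|V(W)|+1=d_W(h)$. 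Two small points to tighten: Lemma~\ref{lem:extremal} (and the very notion of a separating cut as used in the paper) presupposes that $G$ is matching covered, so you should invoke explicitly the standard fact that splicing two matching covered graphs yields a matching covered graph; and ``nonsolid'' is a property of bricks, so that conclusion should logically be drawn after brickness is established, even though you state it first. Neither affects the validity of the argument.
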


The following corollary can be gotten by Corollary \ref{cor:tri} and Theorem \ref{Th:splicing} directly.
\begin{corollary}\label{cor:triangle} Let $G$
be an extremal brick, and $u$ be a vertex of degree 3 in $G$. If $G-u-N(u)$ has only one prefect matching, then  $G\langle u\rangle$ is an extremal brick.
\end{corollary}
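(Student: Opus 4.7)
The plan is to realize $G\langle u\rangle$ as the splicing $G(u)\odot K_4(v)$, where $v$ is regarded as the hub of $K_4$ viewed as the odd wheel $W_3$ (so the rim is the triangle $T=V(K_4)\setminus\{v\}$). Since $\deg_{K_4}(v)=3=\deg_G(u)$, the splicing is well-defined. Corollary \ref{cor:tri} then gives immediately that $G\langle u\rangle$ is a brick, and Theorem \ref{Th:splicing} is poised to deliver extremality as soon as one verifies its hypothesis about the number of perfect matchings intersecting the cut in more than one edge.

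The key step is to count perfect matchings $M$ of $G\langle u\rangle$ with $|M\cap C|>1$, where $C=\partial_{G\langle u\rangle}(T)$. Since $|T|=3$ is odd, any perfect matching must meet $C$ in an odd number of edges, so $|M\cap C|\in\{1,3\}$; the condition $|M\cap C|>1$ therefore reduces to $|M\cap C|=3$. Next I would observe that the three edges of $C$ form a perfect matching between $T$ and $N(u)$, because they are precisely the images under $\theta$ of the three edges of $\partial_G(u)$. Consequently, when $|M\cap C|=3$, the intersection $M\cap C$ is forced to be this fixed matching between $T$ and $N(u)$, so restricting $M$ to the remaining vertices gives a perfect matching of $G\langle u\rangle-T-N(u)\cong G-u-N(u)$. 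Conversely, any perfect matching of $G-u-N(u)$ extends uniquely, by adjoining all three edges of $C$, to a perfect matching of $G\langle u\rangle$ with $|M\cap C|=3$.

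Thus the number of perfect matchings of $G\langle u\rangle$ with $|M\cap C|>1$ equals the number of perfect matchings of $G-u-N(u)$, which is exactly $1$ by hypothesis. Theorem \ref{Th:splicing} then applies and yields that $G\langle u\rangle$ is a (nonsolid) extremal brick, as required. I do not foresee a serious obstacle: the entire argument is a direct application of Theorem \ref{Th:splicing}, and the only actual verification is the parity observation $|M\cap C|\in\{1,3\}$ together with the bijection between PMs with $|M\cap C|=3$ and PMs of $G-u-N(u)$.
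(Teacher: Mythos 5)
Your proposal is correct and follows exactly the route the paper intends: view $G\langle u\rangle$ as the splicing of the odd wheel $K_4=W_3$ at its hub with the extremal brick $G$ at $u$, get brickness from Corollary \ref{cor:tri}, and verify the hypothesis of Theorem \ref{Th:splicing} via the parity argument and the bijection between perfect matchings meeting the cut in three edges and perfect matchings of $G-u-N(u)$. The paper simply asserts the corollary follows ``directly'' from these two results, so your write-up supplies precisely the details it leaves implicit.
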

\begin{figure}[htbp]
\begin{center}
\includegraphics[width=4cm]{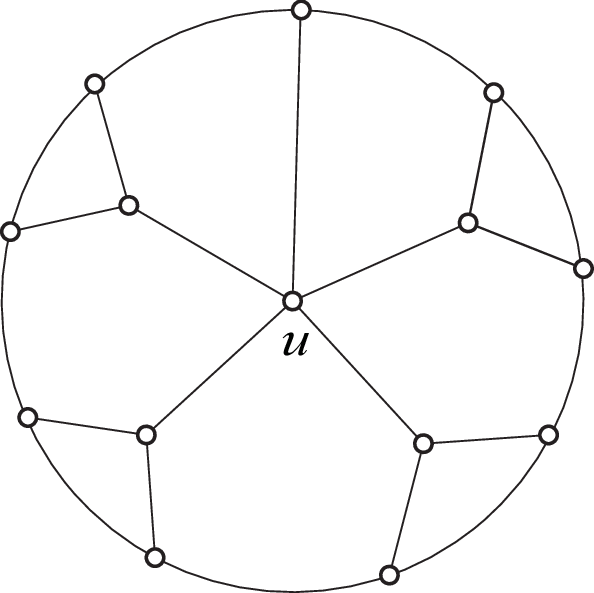}
\caption{ ${G}_0$.}\label{fig:0}\end{center}
\end{figure}

 \begin{lemma}\label{lemma:g0} Let $G_0$ be the graph gotten by the triangle-insertions at four vertices on the cycle of a $W_5$ (see Figure \ref{fig:0}). Then $G_0$ is an extremal brick.
\end{lemma}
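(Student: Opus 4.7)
I would realize $G_0$ as the result of four iterated triangle-insertions starting from $W_5$ and verify the hypotheses of Corollary~\ref{cor:triangle} at each step. Label the rim of $W_5$ as $v_1, v_2, v_3, v_4, v_5$ with hub $h$; set $H_0 := W_5$ and $H_i := H_{i-1}\langle v_i\rangle$ for $i = 1, 2, 3, 4$, so that $G_0 = H_4$. The plan is to prove inductively that each $H_i$ is an extremal brick.

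For the base case, $W_5$ is a brick with $|E(W_5)| - |V(W_5)| + 1 = 10 - 6 + 1 = 5$. Since the hub $h$ must be matched, every perfect matching contains exactly one spoke $hv_j$; once this spoke is chosen, the four remaining rim vertices form a path with a unique perfect matching. Hence $W_5$ has exactly five perfect matchings and is extremal.

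For the inductive step, assume $H_{i-1}$ is extremal. The vertex $v_i$ still has degree three in $H_{i-1}$, because the previous insertions were at other rim vertices and only relabeled the endpoints of the edges incident with $v_i$. By Corollary~\ref{cor:triangle}, it then suffices to show that $H_{i-1} - v_i - N_{H_{i-1}}(v_i)$ has a unique perfect matching. Here $N(v_i)$ comprises $h$ together with the vertex inheriting each of $v_i$'s two rim connections (a triangle vertex of some previously inserted $T_j$, or the bare rim vertex if no triangle has yet been placed on that side). Deleting these four vertices severs every previously inserted triangle from $h$ and reduces the remainder to a short rim fragment with the surviving triangles attached by a single external edge each. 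Since each surviving triangle has odd cardinality and only one external attachment, its internal matching is forced (two of its vertices must be matched by the unique remaining triangle edge, and the third by the external edge), and these forced choices propagate along the rim fragment to yield a single perfect matching.

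The main obstacle is precisely this uniqueness check, which must be carried out for each $i \in \{1, 2, 3, 4\}$. However, each intermediate graph $H_{i-1} - v_i - N(v_i)$ has at most eight vertices and very restricted structure (a short rim-path with a couple of triangles dangling off it, each with only one free external attachment), so the uniqueness verification in each case reduces to a short forcing argument. Iterating Corollary~\ref{cor:triangle} four times then gives that $G_0 = H_4$ is an extremal brick.
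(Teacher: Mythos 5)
Your proposal is correct, but it reaches extremality by a genuinely different route than the paper. The paper applies the triangle-insertion corollary only to conclude that $G_0$ is a brick, and then establishes extremality by a direct count: $|V(G_0)|=14$, $|E(G_0)|=22$, every perfect matching uses exactly one edge at the hub $u$, and for each neighbour $v$ of $u$ lying in an inserted triangle the graph $G_0-u-v$ has two perfect matchings while for the untouched rim vertex it has one, giving $4\cdot 2+1=9=|E(G_0)|-|V(G_0)|+1$. You instead start from the extremality of $W_5$ (five perfect matchings, one per spoke, and $|E(W_5)|-|V(W_5)|+1=5$) and apply Corollary~\ref{cor:triangle} four times, verifying at each stage that $H_{i-1}-v_i-N(v_i)$ has a unique perfect matching. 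Those four uniqueness checks do go through (in each case a degree-one vertex forces an edge and the forcing propagates through the remaining triangles), although your blanket justification that every surviving triangle has only one external attachment is not literally true at the later steps — at the fourth insertion one surviving triangle still has two external edges, and uniqueness comes only from the forcing that eliminates one of them — so the case-by-case verification you defer to is genuinely needed rather than a formality. The trade-off between the two arguments: the paper's is a single short count on one 14-vertex graph, while your induction avoids enumerating the matchings of $G_0$ at the cost of four small forcing arguments plus the base case, and has the advantage of being exactly the technique the paper later uses in Claim~4 of the main theorem, so it fits the overall scheme more uniformly.
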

  \begin{proof}Let $u$ be the hub of $W_5$. By Corollary \ref{cor:triangle} repeatedly, $G_0$ is a brick. Note that $|V(G_0)|=14$, $|E(G_0)|=22$. Moreover, for every vertex $v$ in $ N_{G_0}(u)$, if  $v$ lies in a triangle of $G_0$, then $G_0-u-v$ contains two perfect matchings; otherwise, $G_0-u-v$ contains one perfect matching. So, the brick $G_0$ has 9 perfect matchings. By the definition,  $G_0$ is extremal.
\end{proof}

\section{The main theorem}

 Let $u_0^1u_0^2u_0^3u_0^4$, $u_1^1u_1^2u_1^3u_1^4,\dots, u_t^1u_t^2u_t^3u_t^4$ be $t+1$ paths with 4 vertices, where $t>0$. Assume that  ${G}_t$ is the  graph  gotten from the union of the $t+1$ paths by adding a new vertex $u$ and edges $\{u_s^1u_{s+1}^1, u_s^4u_{s+1}^3:s=0,1,\ldots,t-1\}\cup \{uu_t^1,uu_t^4\}$  to the $t+1$ paths. Let $S=\{u_0^1,u_0^3, u\}\cup \{ u_s^2, u_s^4:s=0,1,\ldots,t\}$. Then $|S|=2t+5$.
Let $S_1=S\setminus\{u_t^2, u_t^4,u\}$,  $S_2=S\setminus\{ u_t^4,u\}$,
$S_3=S\setminus\{u\}$, $S_4=S$.
Let  ${G}'_t$ be the  graph  gotten from $G_t$ by adding a new vertex $u'$ and edges $\{u'v:v\in S\}$. See the left of Figures \ref{fig:1} and \ref{fig:2} for example (when $t=1$ and $t=3$).
Obviously, ${G}_t$ contains odd number of vertices, while $|V({G}'_t)|$ is even.
And in ${G}_t$, every vertex in $S$ is of degree 2,  the other vertices are of degree 3.

Let $n$ be an even integer such that  $n=i+8(t+1)>17$, where $0< i\leq 8$.
 Let  ${G}''_n= {G}'_t \langle S_{\frac{i}{2}}\rangle$.
 Then $|V({G}''_n)|=n$. Label the triangle  inserted at vertex $u$ as $x,y,z$ such that $\{xu', yu_t^1\}\subset E(G_n'')$; label the triangle  inserted at vertex $u_s^j$ as $x_s^j,y_s^j,z_s^j$ such that $\{x_s^ju', y_s^ju_s^{j-1}\}\subset E(G_n'')$ for $s\geq1$ and $j=2,4$;
the labels  of other triangles see the right of Figures  \ref{fig:1} or \ref{fig:2}. Let $ X_t=\{u^1_t,u^2_t, u^3_t,u^4_t,u \}$,
 $\mathcal{G'}=\{G_t':t=1,2,3,\ldots\}$, $\mathcal{G''}=\{G_n'':n=18,20,22,\ldots\}$. Let $M_0=\{x^1_{0}y^1_{0},x^2_{0}z^2_{0},z^1_{0}y^2_{0},x^3_{0}y^3_{0},x^4_{0}z^4_{0},z^3_{0}y^4_{0}\}$, and $M_t=M_0\cup(\cup_{s=1}^{t} \{x_s^4z_s^4,y_s^4u_s^3,x_s^2z_s^2,y_s^2u_s^1\})$.
  The following is our main theorem.

\begin{theorem}\label{main}Every graph in $\mathcal{G''}$ is an extremal brick. Therefore,
the number of perfect matchings of  $G_n''$  is $(5n+8-i)/8$, where $n=i+8(t+1)\geq 18$ and $0< i\leq 8$.
\end{theorem}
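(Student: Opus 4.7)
The plan is to show $G_n''$ is an extremal brick; the perfect-matching count then follows from the identity ``extremal brick $G$ has exactly $|E(G)|-|V(G)|+1$ perfect matchings'', together with the computation $|E(G_n'')|=|E(G_t')|+3|S_{i/2}|=13t+13+3i/2$ and $|V(G_n'')|=|V(G_t')|+2|S_{i/2}|=8t+8+i=n$, which gives $|E(G_n'')|-|V(G_n'')|+1=5t+6+i/2=(5n+8-i)/8$.

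In the first stage I would prove that the scaffold $G_t'$ is itself an extremal brick with $3t+5$ perfect matchings by conditioning each PM on the partner of $u'$ (of degree $|S|=2t+5$). For each $v\in S$, $G_t'-u'-v$ has mostly degree-$2$ vertices; my expectation is: if $v\in\{u,u_0^1,u_0^2,u_0^3,u_0^4,u_t^4\}$, then exactly one $S$-neighbor of $v$ drops to degree $1$ after removing $u'$, triggering a deterministic cascade of forced edges and producing a unique PM (contributing $6$); if $v=u_s^4$ with $1\leq s<t$, no immediate degree-$1$ vertex appears, but a case analysis on the partner of $u$ (the two options $u_t^1$ and $u_t^4$) reveals only one is completable, still yielding a unique PM (contributing $t-1$); if $v=u_s^2$ with $1\leq s\leq t$, both choices for $u$'s partner are feasible and each completes uniquely, giving $2$ PMs (contributing $2t$). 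Summing gives $6+(t-1)+2t=3t+5$, which matches $|E(G_t')|-|V(G_t')|+1=(7t+10)-(4t+6)+1=3t+5$, so extremality of $G_t'$ follows once the brick property is known. I would verify the brick property via the Edmonds--Lov\'asz--Pulleyblank characterization: $G_t'$ is $3$-connected (since $u'$ is adjacent to all $2t+5$ vertices of $S$ while the chain structure of $G_t$ provides redundant paths between non-$u'$ vertices), and $G_t'-\{x,y\}$ admits a PM for every pair of distinct vertices (obtainable by modifying the PMs enumerated above).

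In the second stage I apply Corollary~\ref{cor:triangle} iteratively to the vertices of $S_{i/2}\subseteq S$. Each $v\in S$ has degree $3$ in $G_t'$, so the hypothesis of the corollary reduces to checking that $G_t'-v-N(v)$ has exactly one PM; this is again the degree-$1$ cascade argument. After the first insertion replaces $v$ by a triangle, the condition for a subsequent insertion at $w\in S_{i/2}$ is verified using the identity
\[
p(G\langle v\rangle-w-N(w))=p(G-w-N(w))+p(G-w-N(w)-n_1-n_2),
\]
where $\{n_1,n_2\}$ are $v$'s two neighbors distinct from $w$ and $p(\cdot)$ counts perfect matchings; this follows from how the triangle at $v$ can be matched. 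Because the $S$-to-$S$ adjacencies in $G_t'$ are limited to the path $u_0^1 u_0^2 u_0^3 u_0^4$ (and the edge $u u_t^4$ when $i=8$), most later insertions occur at vertices non-adjacent to previously inserted ones and the condition carries over directly. The main obstacle is the rigorous verification of the brick property of $G_t'$ (the matching-covered property is immediate from the enumeration, but confirming $3$-connectedness and the ``$G_t'-\{x,y\}$ has a PM'' condition requires systematic case analysis) together with the bookkeeping of the extremality-preserving condition for consecutive insertions at adjacent vertices of $S_{i/2}$.
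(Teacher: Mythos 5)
Your overall strategy is sound and it is genuinely different from the paper's. The paper never establishes that the scaffold $G_t'$ is extremal: it only proves $G_t'$ is a brick (Claim 1, a short induction on $t$ via Lemma \ref{lem:extremal}, using that $G_t'/\overline{X_t}$ is a $W_5$ and $G_t'/X_t$ is $G_{t-1}'$ plus two multiple edges), and then proves extremality of $G_n''$ itself by induction on $t$: the base case is Lemma \ref{lemma:g0}, the step $n\equiv 2 \pmod 8$ (attaching a whole new level) is handled with the wheel-splicing Theorem \ref{Th:splicing}, the other residues with Corollary \ref{cor:triangle}, and Claim 3 (the edge $uu'$ is solitary when $n\equiv 6\pmod 8$) supplies the unique-matching hypotheses. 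You instead show $G_t'$ is already an extremal brick by enumerating its $3t+5$ perfect matchings and then build $G_n''$ by iterated triangle insertions using only Corollary \ref{cor:triangle}; this eliminates Theorem \ref{Th:splicing} and the mod-$8$ case analysis, at the cost of the enumeration and of a hands-on verification of the brick property. Your edge/vertex bookkeeping and the identity $|E(G_n'')|-|V(G_n'')|+1=(5n+8-i)/8$ are correct, and spot checks confirm your matching counts for $G_t'$ (e.g.\ $8$ for $t=1$) and the hypotheses $p(G_t'-w-N(w))=1$ for $w\in S$.

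Three points need repair or tightening. First, do not prove the brick property of $G_t'$ via the Edmonds--Lov\'asz--Pulleyblank characterization (you correctly flag this as your main obstacle); the paper's Claim 1 does it in a few lines with Lemma \ref{lem:extremal}, and you should simply use that argument. Second, the propagation of the unique-matching hypothesis through successive insertions should not be argued via adjacency versus non-adjacency: the right observation is that every insertion vertex $w$ lies in $S$, hence $u'\in N(w)$, so in $G\langle v\rangle-w-N(w)$ the $u'$-neighbor of any previously inserted triangle is forced to be matched inside its triangle; consequently $p(G\langle v\rangle-w-N(w))=p(G-w-N(w))$ exactly (equivalently, the second term in your identity, and its analogue $p(G-w-N(w)-v-N_G(v))$ in the non-adjacent case, vanishes because $u'$ is already deleted and the residual graph has odd order). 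This makes the condition order-independent and reduces everything to checking $p(G_t'-w-N(w))=1$ for each $w\in S$. Third, a slip in your enumeration: for $v=u_s^2$ with $1\le s<t$ the two perfect matchings of $G_t'-u'-v$ do not come from the two choices of $u$'s partner (one of those branches is infeasible; e.g.\ for $t=2$, $s=1$ both matchings use $uu_2^4$); the branching occurs elsewhere, though the count $2$, and hence the total $3t+5=|E(G_t')|-|V(G_t')|+1$, is still correct.
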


\begin{proof}Firstly, we have the following claims.

{\bf Claim 1.}\label{lemma:brick}
Every graph  in $\mathcal{G'}$ is a brick.
\begin{proof} Note that $G_t'/(\overline{X_t}\rightarrow \overline{x})$ is a $W_5$ with $\overline{x}$ as its hub. We will show the result by induction on $t$.
 If $t=1$, then $G_1'/{X_1}$ is $W_5$. Note that $\{u_0^1u_1^1, u_0^4u_1^3, u_1^2u'\}$ is a matching in $\partial_{G_1'} (X_1)$. As the odd wheel $W_5$ is a brick,
$G_1'$ is a brick  by Lemma \ref{lem:extremal}. So we assume that the result holds for $t<k$. Now we consider the case when $t=k$.
Then $G_k'/{X_k}$ is isomorphic to the graph gotten from $G_{k-1}'$ by adding two multiple edges between $u$ and $u'$. By induction hypothesis, $G_{k-1}'$ is a brick. By the definition of the brick, adding a multiple edge to a brick is still   a brick.
Note that $\{u_{k-1}^1u_k^1, u_{k-1}^4u_k^3, uu'\}$ is a matching in $\partial_{G_k'} (X_k)$.
By Lemma \ref{lem:extremal} again, the claim follows.
\end{proof}
{\bf Claim 2.} \label{lemma:brick}
Every graph  in $\mathcal{G''}$ is a brick.
\begin{proof}
Note that each graph  in $\mathcal{G''}$ can be gotten from some graph in $\mathcal{G'}$ by the triangle-insertions at several vertices of degree 3. Every graph  in $\mathcal{G'}$ is a brick  by Claim 1. So the claim follows by Corollary \ref{cor:tri} repeatedly.
\end{proof}

{\bf Claim 3.} If $n\equiv 6\mod 8$, then $uu'$ is a solitary edge of ${G''_n}$.
\begin{proof}  In
 $ G_n''-\{u,u'\}$,  the edge $y_t^4u_t^3$ is a cut edge that connects the triangle $x_t^4y_t^4z_t^4$ to the rest part of the graph. So $x_t^4z_t^4 $ and $y_t^4u_t^3 $ lie in every perfect matchings of  $ G_n''-\{u,u'\}$ (if has). Also, the edge $y_t^2u_t^1$ is a cut edge that connects the triangle $x_t^2y_t^2z_t^2$ to the rest part of the graph $ G_n''-\{u,u',x_t^4, y_t^4,z_t^4,u_t^3 \}$.
So $x_t^2z_t^2$ and $y_t^2u_t^1 $ lie in the perfect matchings of $G_n''-\{u,u',x_t^4, y_t^4,z_t^4,u_t^3 \}$. Similarly, the edge set $ \cup_{s=1}^{t-1} \{x_s^4z_s^4,y_s^4u_s^3,x_s^2z_s^2,y_s^2u_s^1\} $ lies in every perfect matching of $ G_n''-\{u,u'\} $. With the same reason, $M_0$ lies the perfect matching of $G_n''-\{u,u' \}- \cup_{s=1}^{t} \{x_s^4,z_s^4,y_s^4,u_s^3,x_s^2,z_s^2,y_s^2,u_s^1\}$.
 Then $M_t$ is the only perfect matching of  $ G_n''-\{u,u'\}$. The claim follows.
\end{proof}

\begin{figure}
\begin{center}
\begin{minipage}{6cm}
\includegraphics[width=4.2cm]{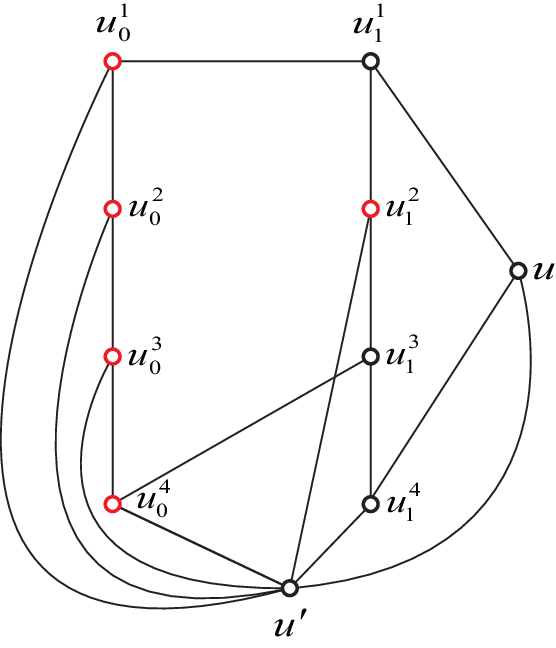}
\end{minipage}
\begin{minipage}{6cm}
\includegraphics[width=5.5cm]{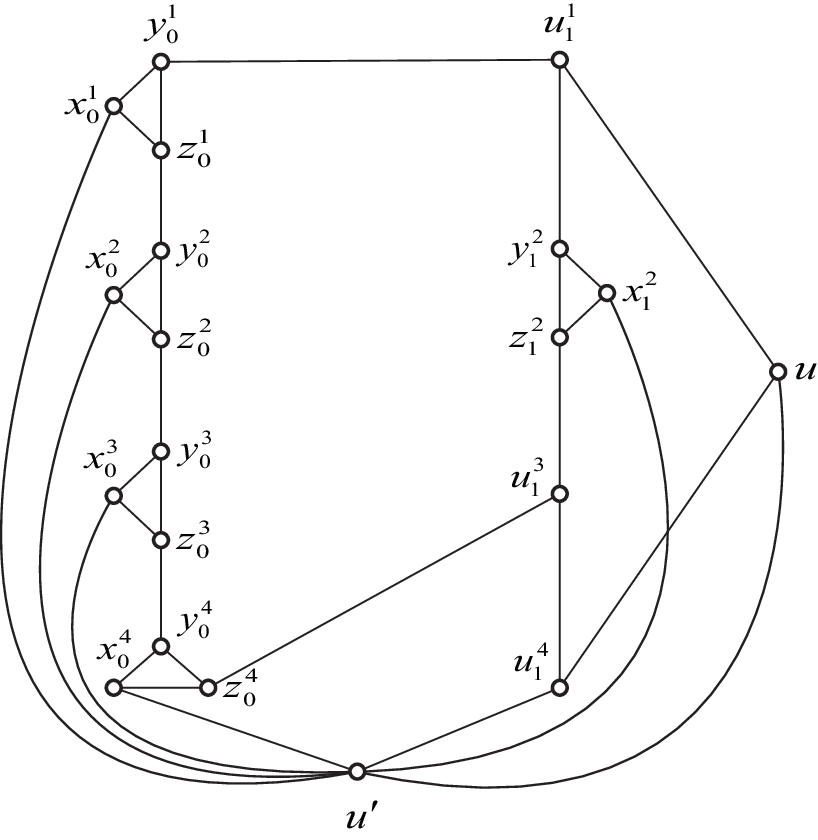}
\end{minipage}
\end{center}
\caption{ ${G}'_1$ and ${G}''_{20}$.}\label{fig:1}
\end{figure}

{\bf Claim 4.} Every graph  in $\mathcal{G''}$ is extremal.
\begin{proof} We will prove the result by induction on $t$.  Firstly, we assume that $t=1$. Then $n\in\{ 18,20,22,24\}$.
If $n\equiv 2\mod 8$, then $n=18$ and $G_n''/(\overline{X_1}\rightarrow \overline{x})$ is a $W_5$ with $\overline{x}$ as its hub. Moreover,
  $G_{18}''/{X_1}$ is isomorphic to $G_0$, which is an extremal brick by Lemma \ref{lemma:g0}.
Note that every edge in $\partial(X_1)$ is incident with one of vertices in $\{y_0^1,z_0^4,u'\}$, and $y_0^1$ and $z_0^4$ are incident with exactly one edge in $\partial(X_1)$ respectively. So the perfect matching of $G_{18}''$ containing more than one edges in $\partial(X_1)$ should contain $y_0^1u_1^1$ and $z_0^4u_1^3$.  As $u_1^2$ has only one neighbor in $G_{18}''-\{y_0^1,u_1^1, z_0^4,u_1^3\}$: $u'$, we have $u_1^2u'$ lies in the perfect matching of $G_{18}''$ containing more than one edges in $\partial(X_1)$. Therefore, the intersection of $\partial(X_1)$ and the perfect matching of $G_{18}''$ containing more than one edges in $\partial(X_1)$ is the edge set $\{y_0^1u_1^1,z_0^4u_1^3,u_1^2u'\}$. It can be checked that
 $G_{18}''-\{y_0^1,u_1^1,z_0^4,u_1^3,u_1^2,u'\}$ contains only one perfect matching: $\{uu_1^4,x_0^4y_0^4,x_0^3z_0^3, z_0^2y_0^3,  x_0^2y_0^2,x_0^1z_0^1\}$.
  By Theorem \ref{Th:splicing}, $G_{18}''$ is extremal.

If $n=20$, then $G_{20}''\cong G_{18}'' \langle u_1^2\rangle$.
It can be checked $M_0\cup \{uu_1^4\}$ is the only perfect matching of $G_{18}''-\{u_1^2 \}-N( u_1^2)$. As the degree of $u_1^2$ is three in $G_{20}''$, the result holds in this case by Corollary \ref{cor:triangle}.
 Similarly,
if $n=22$, then $G_{22}''\cong G_{20}'' \langle u_1^4\rangle$, and $G_{20}''-\{u_1^4 \}-N( u_1^4)$ has only one perfect matching: $M_0\cup \{x_1^2z_1^2,y_1^2u_1^1\}$;
if $n=24$, then $G_{24}''\cong G_{22}'' \langle u\rangle$, and $G_{22}''-\{u\}-N(u)$ has only one perfect matching: $M_0\cup \{x_1^2y_1^2,u_1^3z_1^2,x_1^4y_1^4\}$. By Corollary \ref{cor:triangle} again, the result follows when $t=1$.

\begin{figure}
\begin{center}
\begin{minipage}{7cm}
\includegraphics[width=6cm]{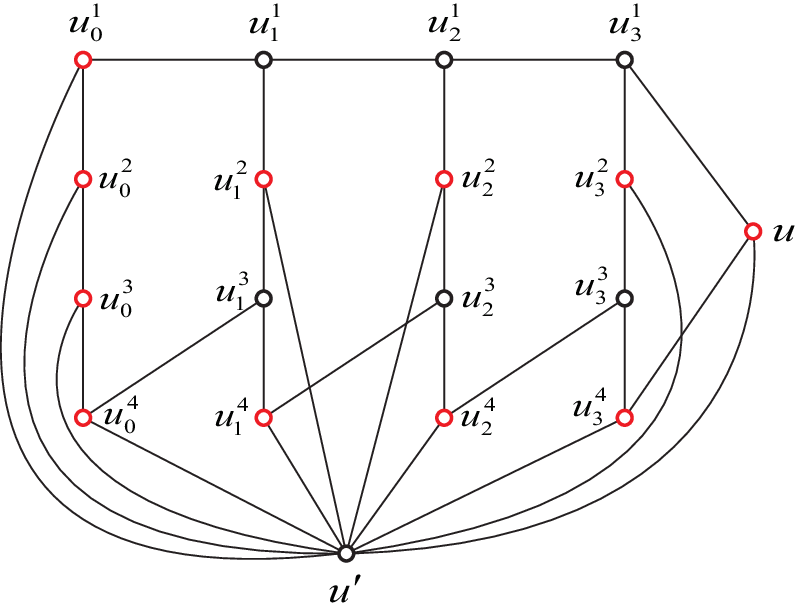}
\end{minipage}
\begin{minipage}{7cm}
\includegraphics[width=8cm]{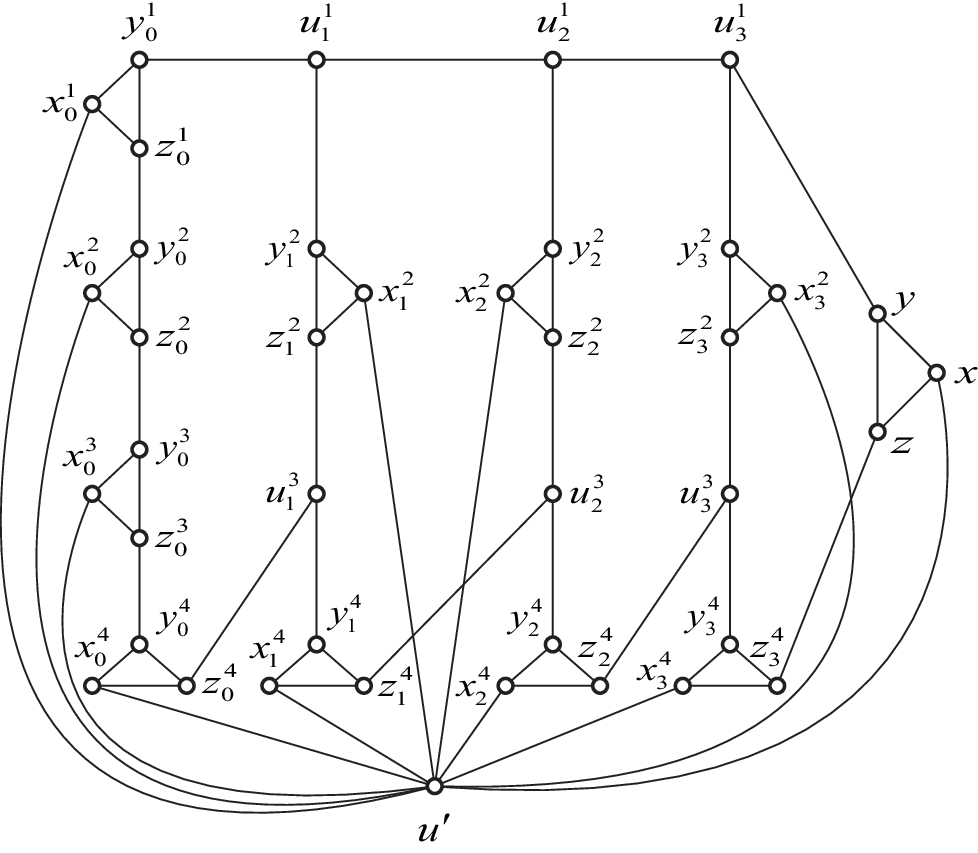}
\end{minipage}
\end{center}
\caption{ ${G}'_3$ and ${G}''_{40}$.}\label{fig:2}
\end{figure}

Now we assume that the result holds for $1<t<k$. Following, we consider the case when $t=k$.
If $n\equiv 2\mod 8$,
then $G_n''/{X_k}$ is isomorphic to the graph gotten from $G_{n-4}''$ by adding two multiple edges between $u$ and $u'$.
As $n\equiv 2\mod 8$, we have $n-4\equiv 6 \mod 8$, that is $n-4=6+8k$.
By Claim 3,  $uu'$ is a solitary edge in $G_{n-4}''$. So
 $G_n''/{X_k}$ contains two more perfect matchings than $G_{n-4}''$. As $G_{n-4}''$ is extremal by induction hypothesis,  $G_n''/{X_k}$ is extremal by the definition.
 Similar to the case when $n=18$, the intersection of $\partial(X_k)$ and the perfect matching of $G_{n}''$ containing more than one edges in $\partial(X_k)$ is  the edge set $\{u_{k-1}^1u_k^1,z_{k-1}^4u_k^3,u_k^2u'\}$.
 In  $G_{n}''- \{u_{k-1}^1,u_k^1,z_{k-1}^4,u_k^3,u_k^2,u'\} $, $x_{k-1}^4$ and $u$ is of degree 1,
 So, $x_{k-1}^4y_{k-1}^4$ and $uu_{k}^4$ lie in every perfect matching of $G_{n}''- \{u_{k-1}^1,u_k^1,z_{k-1}^4,u_k^3,u_k^2,u'\} $. With the same reason, the edge set
 $\{  u_{k-1}^3z_{k-1}^2,  x_{k-1}^2y_{k-1}^2 \}$ lies in every perfect matching of $G_{n}''- (\{u_{k-1}^1,z_{k-1}^4,u', x_{k-1}^4,y_{k-1}^4,u\}\cup(\cup_{i=1}^4\{u_k^i\}))$.
  Note that
 $G_{n}''- (\cup_{i=1}^4\{u_k^i\}\cup(\cup_{i=2,4}\{z_{k-1}^i,  x_{k-1}^i,y_{k-1}^i \})\cup
 \{u_{k-1}^1,u', u, u_{k-1}^3 \}) $
 is isomorphic to $G_{n-12}'' -\{u,u'\}$ when $n>26$ (if $n=26$,  $G_{n}''- (\cup_{i=1}^4\{u_k^i\}\cup(\cup_{i=2,4}\{z_{k-1}^i,  x_{k-1}^i,y_{k-1}^i \})\cup
 \{u_{k-1}^1,u', u, u_{k-1}^3 \} )$ is isomorphic to the graph gotten from  a path with 12 vertices  by adding 4 edges forming 4  vertex-disjoint triangles, which has exactly one perfect matching).
  As $n-12\equiv 6 \mod 8$, $G_{n-12}'' -\{u,u'\}$ has exactly one perfect matching by Claim 3.
So,   $G_{n}''- \{u_{k-1}^1,u_k^1,z_{k-1}^4,u_k^3,u_k^2,u'\} $ has exactly one perfect matching. Therefore,
 the result follows by Theorem \ref{Th:splicing} when $n\equiv 2\mod 8$.

If $n\equiv 4\mod 8$, then $ G_n''\cong G_{n-2}''\langle u_k^2\rangle$. Note that $N_{G_{n-2}''}(u_k^2)=\{u_k^1,u_k^3,u'\}$. In
$ G_{n-2}''- u_k^2-N(u_k^2)$, $u$ is of degree 1. So $uu_k^4$  belongs to every perfect matching of $G_{n-2}''- u_k^2-N(u_k^2)$.  Note that $G_{n-2}''-\{u,u_k^4,u_k^2\}-N(u_k^2)$ is isomorphic to
   $G_{n-6}''- \{u,u'\}$, and $n-6\equiv 6\mod 8$. By Claim 3, $G_{n-6}''- \{u,u'\}$ contains only  one perfect matching. So $ G_{n-2}''- u_k^2-N(u_k^2)$ has exactly one perfect matching.
As $n-2\equiv 2\mod 8$, $ G_{n-2}''$ is extremal by last paragraph. So
 the result follows by Corollary \ref{cor:triangle} when $n\equiv 4\mod 8$.

If $n\equiv 6\mod 8$, then $ G_n''\cong G_{n-2}''\langle u_k^4\rangle$.
Note that $N_{G_{n-2}''}(u_k^4)=\{u,u',u_k^3\}$, and the edge set $\{x_k^2z_k^2, u_k^1y_k^2\}\cup M_{k-1}$ is the only perfect matching of $ G_{n-2}''- u_k^4-N(u_k^4)$.
Similar to last paragraph,  the result follows by Corollary \ref{cor:triangle} in this case.
If $n\equiv 0\mod 8$, then $ G_n'' \cong G_{n-2}''\langle u\rangle$. It can be checked that $\{x_k^2y_k^2,x_k^4y_k^4,z_k^2u_k^3 \}\cup M_{k-1}$ is the only perfect matching of $ G_{n-2}''- u-N(u)$. Note that $|N_{G_{n-2}''}(u)|=3$. Similarly, the result follows by Corollary \ref{cor:triangle} again.
\end{proof}

Note that
 for every graph in $G_n''$, except one vertex is of degree $2t+5$, every vertex is of degree 3. So $|E(G_n'')|=(2t+5+3(n-1))/2=t+1+3n/2$. Therefore, $|E(G_n'')|-|V(G_n'')|+1=t+2+n/2 =\lceil\frac{5n}{8}\rceil $ (recall that $n=i+8(t+1)$ and $0< i\leq 8$). By Claim 4, $G_n''$ is extremal. So the result follows.\end{proof} 

It is known that every wheel is solid. Carvalho,   Lucchesi and   Murty \cite{CLM05} showed that for  a solid brick $G$ on $n$ vertices and $n>4$,   $G$
is extremal and if and only if it is an odd wheel up to multiple spokes.
 Note that every graph in $\mathcal{G''}$ is not solid. It may be true that the Conjecture \ref{Lucchesi2024} holds for solid bricks.

\end{document}